\newtheorem{theorem}{Theorem}
\newtheorem{corollary}[theorem]{Corollary}
\newtheorem{lemma}[theorem]{Lemma}
\newenvironment{proof}[1][Proof]{\noindent\textbf{#1.} }{\ \rule{0.5em}{0.5em}}
\begin{document}

\title{{\normalsize SUMS OF PRODUCTS OF GENERALIZED FIBONACCI AND LUCAS
NUMBERS}}
\author{Hac\`{e}ne Belbachir and Farid Bencherif \\
\ \ \ \ \ \ \ \ \ \ \ \ \ \ \ \ \ \ \ \ \ \ \ \ \ \ \ \ \ \ \ \ \ \ \ \ \ \
\ \ \ \ \ \ \ \ \ \ \ \\
USTHB, Fac. Math., P.B. 32 El Alia, 16111, Algiers, Algeria.\\
{\small \ hbelbachir@usthb.dz or hacenebelbachir@gmail.com}\\
{\small \ fbencherif@usthb.dz or fbencherif@gmail.com}}
\maketitle

\begin{abstract}
In this paper, we establish several \ formulae for sums and alternating sums
of products of generalized Fibonacci and Lucas numbers. In particular, we
recover and extend all results of Z. \v{C}erin \cite[2005]{cer1} and Z. \v{C}%
erin and G. M. Gianella \cite[2006]{cer2}, more easily.
\end{abstract}

\noindent \textbf{Keywords.} Fibonnaci numbers, Lucas numbers, Pell numbers,
Alternating sums, Integer sequences

\noindent \textbf{MSC2000.} 11B39 11Y55

\section{Introduction and main result}

Let $p$ and $q$ two integers such that $pq\neq 0$ and $\Delta :=p^{2}-4q\neq
0$. We define sequences of generalized Fibonacci and Lucas numbers $\left(
U_{n}\right) =(U_{n}^{\left( p,q\right) })$ and $\left( V_{n}\right)
=(V_{n}^{\left( p,q\right) }),$ for all $n,$ by induction%
\begin{equation*}
\left\{
\begin{array}{l}
U_{0}=0,\ U_{1}=1,\ U_{n}=pU_{n-1}-qU_{n-2} \\
V_{0}=2,\ V_{1}=p,\ V_{n}=pV_{n-1}-qV_{n-2}%
\end{array}%
\right.
\end{equation*}

Sequences of Fibonacci $\left( F_{n}\right) ,$ Lucas $\left( L_{n}\right) ,$
Pell $\left( P_{n}\right) ,$ Pell-Lucas $\left( Q_{n}\right) ,$ Jacobsthal $%
\left( J_{n}\right) ,\ $Jacobsthal-Lucas $\left( j_{n}\right) $ listed
respectively {\small A000045, A00032, A000129, A002203, A001045, A014551 }in%
{\small \ SLOANE} \cite{slo} are $\left( F_{n},L_{n}\right) =(U_{n}^{\left(
1,-1\right) },V_{n}^{\left( 1,-1\right) }),$ $\left( P_{n},Q_{n}\right)
=(U_{n}^{\left( 2,-1\right) },V_{n}^{\left( 2,-1\right) }),$ $\left(
J_{n},j_{n}\right) =(U_{n}^{\left( 1,-2\right) },V_{n}^{\left( 1,-2\right)
}) $ for $n\geq 0.$

For $r$ and $s$ two integers and for all sequences $\left( X_{m}\right)
_{m\in
\mathbb{Z}
}$\ and $\left( Y_{m}\right) _{m\in
\mathbb{Z}
}$, let%
\begin{equation*}
S_{n}^{\left( r,s\right) }\left( X,Y\right) :=\sum_{i=0}^{n}X_{r+2i}Y_{s+2i}%
\text{ \ and \ }A_{n}^{\left( r,s\right) }\left( X,Y\right)
:=\sum_{i=0}^{n}\left( -1\right) ^{i}X_{r+2i}Y_{s+2i},
\end{equation*}%
for convenience, we also set $S_{n}^{\left( r,s\right) }\left( X\right)
:=S_{n}^{\left( r,s\right) }\left( X,X\right) $ and $A_{n}^{\left(
r,s\right) }\left( X\right) :=A_{n}^{\left( r,s\right) }\left( X,X\right) .$

Sums involving Fibonacci$,$ Lucas$,$ Pell and Pell-Lucas numbers and
generalizations have been studied by several authors, for example, for
trigonometric sums see Melham \cite[1999]{mel} and Belbachir \& Bencherif
\cite[2007]{bel4}, for reciprocal and powers sums see Melham \cite[1999]%
{mel1} and \cite[2000]{mel2}, and for the sum of squares see Long \cite[1986]%
{lon}, \v{C}erin \cite[2005]{cer1} and \v{C}erin \& Gianella \cite[2006]%
{cer2}.

In \cite[2005]{cer1}, \v{C}erin studied $A_{n}^{\left( r,s\right) }\left(
L\right) $ for $s=r$ and $s=r+1$ when $r$ is odd, and in \cite[2006]{cer2},
\v{C}erin and Gianella considered $S_{n}^{\left( r,s\right) }\left( Q\right)
$\ and $A_{n}^{\left( r,s\right) }\left( Q\right) $ for $s=r$ and $s=r+1$
when $r$ is even.

Recently, \v{C}erin \cite[2007]{cer3} studied the sums of squares and
products of Jacobsthal numbers by establishing identities for $S_{n}^{\left(
r,s\right) }\left( J\right) ,$\ and\ $A_{n}^{\left( r,s\right) }\left(
J\right) ,$ for $s=r$ and $s=r+1$ when $r$ is even$.$ This case corresponds
to $\left( p,q\right) =\left( 1,-2\right) .$

Our purpose is to give simplified expressions for the sums $S_{n}^{\left(
r,s\right) }\left( U\right) ,$\ $S_{n}^{\left( r,s\right) }\left( V\right) ,$%
\ $A_{n}^{\left( r,s\right) }\left( U\right) $ and $A_{n}^{\left( r,s\right)
}\left( V\right) .$ In all what follows, we suppose $q=\pm 1$ (which gives $%
V_{2}\neq 0,$ $U_{2}\neq 0$ and $U_{4}\neq 0$)$.$

For $n\in
\mathbb{Z}
$, let us define the sequences $\left( a_{n}\right) ,$ $\left( b_{n}\right)
, $ $\left( c_{n}\right) ,$ $\left( d_{n}\right) $ and$\ \left( e_{n}\right)
$ by the relations%
\begin{equation*}
a_{n}=\frac{U_{2n}}{U_{2}},\ b_{n}=\frac{d_{n+1}-1}{p^{2}\Delta },\ c_{n}=%
\frac{U_{4n+4}}{U_{4}},\ d_{n}=\frac{V_{4n+2}}{V_{2}}\text{, }e_{n}=p\left(
d_{n}-1\right) .
\end{equation*}

These sequences, depending on $p$ and $q$ satisfy the recurrence relations

\begin{center}
$%
\begin{array}{lll}
a_{-1}=-1,\ \  & a_{0}=0,\ \  & a_{n}=V_{2}a_{n-1}-a_{n-2}, \\
b_{-1}=0, & b_{0}=1, & b_{n}=V_{4}b_{n-1}-b_{n-2}+1, \\
c_{-1}=0, & c_{0}=1, & c_{n}=V_{4}c_{n-1}-c_{n-2}, \\
d_{-1}=1, & d_{0}=1, & d_{n}=V_{4}d_{n-1}-d_{n-2}, \\
e_{-1}=0, & e_{0}=0, & e_{n}=V_{4}e_{n-1}-e_{n-2}+p^{3}\left( p^{2}-4\right)
.%
\end{array}%
$
\end{center}

For $\left( p,q\right) =\left( 1,-1\right) ,$ we have, for $n\geq 0,$ $%
\left( {\small U}_{n}{\small ,V}_{n}\right) =\left( {\small F}_{n}{\small ,L}%
_{n}\right) $ and one gets $\left( a_{n}\right) =\left( {\small 0,1,3,8,21}%
,\ldots \right) $, $\left( b_{n}\right) =\left( {\small 1,8,56,385,2640,}%
\ldots \right) $, $\left( c_{n}\right) =\left( {\small 1,7,48,329,2255}%
,\ldots \right) $ and $\left( d_{n}\right) =\left( {\small 1,6,41,281,1926,}%
\ldots \right) $ listed in {\small SLOANE} respectively as {\small A001906,
A092521, A004187, A049685}.

For $\left( p,q\right) =\left( 2,-1\right) ,$ we have, for $n\geq 0,$ $%
\left( {\small U}_{n}{\small ,V}_{n}\right) =\left( {\small P}_{n}{\small ,Q}%
_{n}\right) $ and one gets $\left( a_{n}\right) =\left( {\small 0,1,6,35,}%
\ldots \right) $, $\left( b_{n}\right) =\left( {\small 1,35,1190,40426}%
,\ldots \right) $, $\left( c_{n}\right) =\left( {\small 1,34,1155,39236}%
,\ldots \right) $ and $\left( d_{n}\right) =\left( {\small 1,33,1121,38081}%
,\ldots \right) $ listed in {\small SLOANE} respectively as {\small A001109,
A029546, A029547, A077420}.

\ \ \ \ \ \ \ \ \ \ \ \ \ \ \ \ \ \

We give now, for $\varepsilon =\left( 1+\left( -1\right) ^{n}\right) /2$,
the main result of the paper

\begin{theorem}
\label{AA}For all integers$\ r,\ s$ and\ $n\geq 0,$ we have%
\begin{eqnarray*}
S_{n}^{\left( r,s\right) }\left( U\right)
&=&\sum_{i=0}^{n}U_{r+2i}U_{s+2i}=p^{-1}\Delta ^{-1}\left[
U_{4n+r+s+2}-U_{r+s-2}\right] -\left( n+1\right) \Delta ^{-1}q^{r}V_{s-r}, \\
S_{n}^{\left( r,s\right) }\left( V\right)
&=&\sum_{i=0}^{n}V_{r+2i}V_{s+2i}=p^{-1}\left[ U_{4n+r+s+2}-U_{r+s-2}\right]
+\left( n+1\right) q^{r}V_{s-r}, \\
S_{n}^{\left( r,s\right) }\left( U,V\right)
&=&\sum_{i=0}^{n}U_{r+2i}V_{s+2i}=p^{-1}\Delta ^{-1}\left[
V_{4n+r+s+2}-V_{r+s-2}\right] -\left( n+1\right) \Delta ^{-1}q^{r}U_{s-r}, \\
A_{n}^{\left( r,s\right) }\left( U\right) &=&\sum_{i=0}^{n}\left( -1\right)
^{i}U_{r+2i}U_{s+2i}=\Delta ^{-1}V_{2}^{-1}\left[ V_{r+s-2}+\left( -1\right)
^{n}V_{4n+r+s+2}\right] -\varepsilon \Delta ^{-1}q^{r}V_{s-r}, \\
A_{n}^{\left( r,s\right) }\left( V\right) &=&\sum_{i=0}^{n}\left( -1\right)
^{i}V_{r+2i}V_{s+2i}=V_{2}^{-1}\left[ V_{r+s-2}+\left( -1\right)
^{n}V_{4n+r+s+2}\right] +\varepsilon q^{r}V_{s-r}, \\
A_{n}^{\left( r,s\right) }\left( U,V\right) &=&\sum_{i=0}^{n}\left(
-1\right) ^{i}U_{r+2i}V_{s+2i}=V_{2}^{-1}\left[ U_{r+s-2}+\left( -1\right)
^{n}U_{4n+r+s+2}\right] -\varepsilon q^{r}U_{s-r}.
\end{eqnarray*}
\end{theorem}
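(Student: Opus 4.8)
The plan is to pass to the Binet representation and split the proof into two independent, purely mechanical steps: a \emph{product-to-single-index} step that turns each summand into one generalized Fibonacci (or Lucas) number plus a constant, and a \emph{telescoping} step that evaluates the resulting arithmetic-progression sums. Let $\alpha,\beta$ be the roots of $x^{2}-px+q=0$, so that $\alpha+\beta=p$, $\alpha\beta=q$, $(\alpha-\beta)^{2}=\Delta$, and $U_{n}=(\alpha^{n}-\beta^{n})/(\alpha-\beta)$, $V_{n}=\alpha^{n}+\beta^{n}$. Everywhere I will use the standing hypothesis $q=\pm 1$, i.e. $q^{2}=1$, which both kills the factors $q^{2i}$ and forces the clean contraction identities below; the negative-index rules $U_{-m}=-q^{-m}U_{m}$ and $V_{-m}=q^{-m}V_{m}$ will be used to normalise the constant terms.

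For the first step, expanding the Binet products and factoring $q^{\min}$ out of the cross terms gives, with $m=r+2i$, $k=s+2i$ (so $m+k=r+s+4i$, $m-k=r-s$, and $q^{k}=q^{s+2i}=q^{s}$ since $q^{2i}=1$), the three identities $U_{r+2i}U_{s+2i}=\Delta^{-1}\bigl(V_{r+s+4i}-q^{r}V_{s-r}\bigr)$, $V_{r+2i}V_{s+2i}=V_{r+s+4i}+q^{r}V_{s-r}$, and $U_{r+2i}V_{s+2i}=U_{r+s+4i}-q^{r}U_{s-r}$; in the last two the constant is rewritten via $q^{s}U_{r-s}=-q^{r}U_{s-r}$ and $q^{s}V_{r-s}=q^{r}V_{s-r}$. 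Summing over $i=0,\dots,n$ and using $\sum_{i=0}^{n}1=n+1$ and $\sum_{i=0}^{n}(-1)^{i}=\varepsilon$ reduces each of the six assertions to a single closed form for $\sum_{i=0}^{n}W_{r+s+4i}$ or $\sum_{i=0}^{n}(-1)^{i}W_{r+s+4i}$ with $W\in\{U,V\}$.

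For the second step I would establish the four contraction identities, all immediate from Binet and $q^{2}=1$: $U_{m+2}-U_{m-2}=p\,V_{m}$ and $V_{m+2}-V_{m-2}=p\Delta\,U_{m}$ for the ordinary sums, and $U_{m+2}+U_{m-2}=V_{2}\,U_{m}$, $V_{m+2}+V_{m-2}=V_{2}\,V_{m}$ for the alternating ones. Writing $a=r+s$ and substituting $V_{a+4i}=p^{-1}(U_{a+4i+2}-U_{a+4i-2})$ makes the ordinary sum telescope to $\sum_{i=0}^{n}V_{a+4i}=p^{-1}(U_{a+4n+2}-U_{a-2})$, and symmetrically $\sum_{i=0}^{n}U_{a+4i}=(p\Delta)^{-1}(V_{a+4n+2}-V_{a-2})$. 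For the alternating sums, substituting $V_{a+4i}=V_{2}^{-1}(V_{a+4i+2}+V_{a+4i-2})$ turns the $(-1)^{i}$-weighted series into a telescoping one with auxiliary term $k_{i}=(-1)^{i}V_{a+4i+2}$, yielding $\sum_{i=0}^{n}(-1)^{i}V_{a+4i}=V_{2}^{-1}\bigl(V_{a-2}+(-1)^{n}V_{a+4n+2}\bigr)$, and likewise for $U$. Feeding these four evaluations back (with $a=r+s$) into the six reduced expressions produces the stated formulae verbatim.

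The routine parts are genuinely routine once the identities are in place; the real care is concentrated in the telescoping step, so I expect the main obstacle to be the sign and index bookkeeping there — choosing the correct contraction (difference vs. sum), tracking the boundary terms $h_{-1},k_{-1}$ at the lower end, and consistently normalising the constants $q^{r}V_{s-r}$, $q^{r}U_{s-r}$ through the negative-index rules. All of these hinge on $q^{2}=1$, which is exactly where the hypothesis $q=\pm 1$ is indispensable; an alternative route by induction on $n$ using the recurrences for $a_{n},\dots,e_{n}$ is possible but would obscure this structural transparency.
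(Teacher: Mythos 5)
Your proof is correct and is essentially the paper's own argument: your product-to-single-index identities are relations 3.--6. of Lemma \ref{lemma a} (specialized with $q^{2i}=1$), and your four telescoping evaluations of $\sum_{i}W_{r+s+4i}$ and $\sum_{i}(-1)^{i}W_{r+s+4i}$ are exactly Lemma \ref{lemma c}, which the paper likewise derives from those same relations taken at index $2$. The only deviation is cosmetic: you write the characteristic equation correctly as $x^{2}-px+q=0$, whereas the paper's proof misprints it as $x^{2}-px-q=0$.
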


\begin{corollary}
\label{c1}For all integers$\ r,\ s$ and\ $n\geq 0,$ we have%
\begin{eqnarray}
\Delta S_{n}^{\left( r,s\right) }\left( U\right) &=&a_{n+1}V_{2n+r+s}-\left(
n+1\right) q^{r}V_{s-r},  \label{3.95} \\
S_{n}^{\left( r,s\right) }\left( V\right) &=&a_{n+1}V_{2n+r+s}+\left(
n+1\right) q^{r}V_{s-r},  \label{3.96} \\
S_{n}^{\left( r,s\right) }\left( U,V\right) &=&a_{n+1}U_{2n+r+s}-\left(
n+1\right) q^{r}U_{s-r}, \\
\Delta A_{n}^{\left( r,s\right) }\left( U\right) &=&\left\{
\begin{array}{ll}
d_{m}V_{4m+r+s}-q^{r}V_{s-r} & \text{\ \ if }n=2m \\
-p\Delta c_{m}U_{4m+r+s+2} & \text{\ \ if }n=2m+1%
\end{array}%
\right. ,  \label{3.97} \\
A_{n}^{\left( r,s\right) }\left( V\right) &=&\left\{
\begin{array}{ll}
d_{m}V_{4m+r+s}+q^{r}V_{s-r} & \text{\ \ if }n=2m \\
-p\Delta c_{m}U_{4m+r+s+2} & \text{\ \ if }n=2m+1%
\end{array}%
\right. ,  \label{3.99} \\
A_{n}^{\left( r,s\right) }\left( U,V\right) &=&\left\{
\begin{array}{ll}
d_{m}U_{4m+r+s}-q^{r}U_{s-r} & \text{\ \ if }n=2m \\
-pc_{m}V_{4m+r+s+2} & \text{\ \ if }n=2m+1%
\end{array}%
\right. .
\end{eqnarray}
\end{corollary}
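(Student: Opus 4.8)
The plan is to read the Corollary as a direct re-packaging of Theorem \ref{AA}: each closed form there is a difference (or signed sum) of two terms of the shape $X_{4n+r+s+2}\mp X_{r+s-2}$, and I would collapse each such two-term bracket into a single product by running the classical product-to-sum identities for Lucas sequences \emph{backwards}. The tools I would assemble first, all immediate from the Binet forms $U_n=(\alpha^n-\beta^n)/(\alpha-\beta)$, $V_n=\alpha^n+\beta^n$ with $\alpha+\beta=p$, $\alpha\beta=q$, are
\[
U_aV_b=U_{a+b}+q^{b}U_{a-b},\quad V_aV_b=V_{a+b}+q^{b}V_{a-b},\quad \Delta\,U_aU_b=V_{a+b}-q^{b}V_{a-b},
\]
together with the negative-index rules $U_{-k}=-q^{-k}U_k$, $V_{-k}=q^{-k}V_k$. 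The whole argument hinges on the standing hypothesis $q=\pm1$, which gives $q^{2}=1$, hence $q^{2j}=1$ for every integer $j$, and on the elementary values $U_2=p$, $V_2=p^{2}-2q$, $U_4=p(p^{2}-2q)=pV_2$.

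For the three non-alternating sums I would show $p^{-1}\bigl[U_{4n+r+s+2}-U_{r+s-2}\bigr]=a_{n+1}V_{2n+r+s}$. Since $a_{n+1}=U_{2n+2}/U_2=U_{2n+2}/p$, this reduces to $U_{2n+2}V_{2n+r+s}=U_{4n+r+s+2}-U_{r+s-2}$, which I obtain from $U_aV_b=U_{a+b}+q^{b}U_{a-b}$ with $a=2n+2$, $b=2n+r+s$: the second term is $q^{2n+r+s}U_{2-r-s}=-q^{2n+2}U_{r+s-2}=-U_{r+s-2}$ after applying $U_{-k}=-q^{-k}U_k$ and $q^{2n+2}=1$. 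Substituting this into the $U$-sum of Theorem \ref{AA} (and multiplying by $\Delta$) gives \eqref{3.95}, and the same bracket identity gives \eqref{3.96}. For the mixed sum I instead use $\Delta\,U_aU_b=V_{a+b}-q^{b}V_{a-b}$ with the same $a,b$ to get $p^{-1}\Delta^{-1}\bigl[V_{4n+r+s+2}-V_{r+s-2}\bigr]=p^{-1}U_{2n+2}U_{2n+r+s}=a_{n+1}U_{2n+r+s}$, which yields the third identity.

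For the three alternating sums I would split on the parity of $n$ using $\varepsilon=(1+(-1)^n)/2$, so that $\varepsilon=1,(-1)^n=+1$ when $n=2m$ and $\varepsilon=0,(-1)^n=-1$ when $n=2m+1$. In the even case the bracket from Theorem \ref{AA} becomes $V_2^{-1}\bigl[V_{r+s-2}+V_{8m+r+s+2}\bigr]$, and $V_aV_b=V_{a+b}+q^{b}V_{a-b}$ with $a=4m+2$, $b=4m+r+s$ (using $q^{4m+2}=1$) collapses it to $d_mV_{4m+r+s}$ since $d_m=V_{4m+2}/V_2$; the mixed even case is identical with $U_bV_a=U_{a+b}+q^{a}U_{b-a}$. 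In the odd case $\varepsilon=0$ kills the $V_{s-r}$ (resp.\ $U_{s-r}$) term and the bracket becomes $V_2^{-1}\bigl[V_{r+s-2}-V_{8m+r+s+6}\bigr]$; here I use $\Delta\,U_{4m+4}U_{4m+r+s+2}=V_{8m+r+s+6}-V_{r+s-2}$ (again $q^{4m+4}=1$) to rewrite it as $-\tfrac{p}{U_4}\,\Delta\,c_m U_{4m+r+s+2}$, and the constant falls into place precisely because $U_4=pV_2$, turning $p/U_4$ into $1/V_2$. The same $U_4=pV_2$ normalization governs the odd mixed case via $U_{4m+4}V_{4m+r+s+2}=U_{8m+r+s+6}-U_{r+s-2}$. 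I expect the odd-$n$ alternating cases to be the main obstacle: they are the only place where the prefactor does not simplify by $q^{2j}=1$ alone and one must recognize the hidden relation $U_4=pV_2$ to match the $V_2^{-1}$ coming from the Theorem with the $U_4^{-1}$ hidden in $c_m$.
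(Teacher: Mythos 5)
Your overall strategy is sound and is, in substance, the paper's own proof: the bracket-collapsing identities you establish (e.g.\ $U_{2n+2}V_{2n+r+s}=U_{4n+r+s+2}-U_{r+s-2}$, $V_{4m+2}V_{4m+r+s}=V_{8m+r+s+2}+V_{r+s-2}$, $\Delta U_{4m+4}U_{4m+r+s+2}=V_{8m+r+s+6}-V_{r+s-2}$) are exactly the product-form equalities of Lemma \ref{lemma c} with $r$ replaced by $r+s$, and the paper obtains Theorem \ref{AA} and Corollary \ref{c1} precisely from relations 3.--6.\ of Lemma \ref{lemma a} together with Lemma \ref{lemma c}. Your parity split via $\varepsilon$, your systematic use of $q^{2}=1$ and of the negative-index rules, and the normalizations $a_{n+1}=U_{2n+2}/p$, $d_m=V_{4m+2}/V_2$, $c_m=U_{4m+4}/U_4$, $U_4=pV_2$ are all correct, and they completely settle (\ref{3.95}), (\ref{3.96}), (\ref{3.97}), (\ref{3.99}) and the alternating mixed sum.

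There is, however, one genuine gap: the third identity, $S_n^{(r,s)}(U,V)=a_{n+1}U_{2n+r+s}-(n+1)q^{r}U_{s-r}$. Your bracket identity $p^{-1}\Delta^{-1}\bigl[V_{4n+r+s+2}-V_{r+s-2}\bigr]=a_{n+1}U_{2n+r+s}$ is correct, but substituting it into the third formula of Theorem \ref{AA} \emph{as printed} gives $S_n^{(r,s)}(U,V)=a_{n+1}U_{2n+r+s}-(n+1)\Delta^{-1}q^{r}U_{s-r}$, which is not the corollary's statement: the stray $\Delta^{-1}$ does not disappear, so your claim that the substitution ``yields the third identity'' is a non sequitur. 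The resolution is that the printed theorem contains a typo; the correct constant term is $-(n+1)q^{r}U_{s-r}$ without $\Delta^{-1}$, as follows from relations 5.\ and 1.\ of Lemma \ref{lemma a}: $U_{r+2i}V_{s+2i}=U_{r+s+4i}+q^{s+2i}U_{r-s}=U_{r+s+4i}-q^{r}U_{s-r}$, summed over $i$. A numerical check confirms this: for $(p,q)=(1,-1)$, $r=1$, $s=0$, $n=0$ one has $U_1V_0=2$, which matches the corollary ($a_1U_1-qU_{-1}=1+1=2$) but not the printed theorem ($\tfrac15(V_3-V_{-1})-\tfrac15 qU_{-1}=\tfrac65$). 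So for this one identity you cannot quote the theorem; you must rederive its constant term from Lemma \ref{lemma a}, which your own toolkit already permits. (A minor slip in the same part of your write-up: the intermediate expression $-\tfrac{p}{U_4}\Delta c_m U_{4m+r+s+2}$ should be $-\tfrac{p}{U_4}\Delta U_{4m+4}U_{4m+r+s+2}$; the chain $U_{4m+4}=c_mU_4$ and $U_4=pV_2$ then gives $-p\Delta c_mU_{4m+r+s+2}$ as you intend.)
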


\begin{corollary}
\label{c2}For all integers$\ r,\ s,$ $t$ and\ $n\geq 0,$ we have%
\begin{equation}
S_{n}^{\left( s,s\right) }\left( U\right) -q^{s-r}S_{n}^{\left( r,r\right)
}\left( U\right) =\Delta ^{-1}\left( S_{n}^{\left( s,s\right) }\left(
V\right) -q^{s-r}S_{n}^{\left( r,r\right) }\left( V\right) \right)
=a_{n+1}U_{s-r}U_{2n+r+s+t},  \label{3.11}
\end{equation}%
\begin{equation}
S_{n}^{\left( s,s+t\right) }\left( V\right) +\Delta q^{s-r}S_{n}^{\left(
r,r+t\right) }\left( U\right) =a_{n+1}V_{s-r}V_{2n+r+s+t}.  \label{3.13}
\end{equation}
\end{corollary}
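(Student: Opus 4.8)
The plan is to deduce both \eqref{3.11} and \eqref{3.13} straight from Corollary \ref{c1}, by substituting the closed forms for the sums and then collapsing a two-term Lucas expression into a single product. To give the free index $t$ content on both sides of \eqref{3.11}, I read its left-hand side with the index pairs $(s,s+t)$ and $(r,r+t)$, in parallel with the shape of \eqref{3.13}; the case $t=0$ is the pure ``sum of squares'' specialization.

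First I would insert \eqref{3.95} and \eqref{3.96}, that is $\Delta S_n^{(r,s)}(U)=a_{n+1}V_{2n+r+s}-(n+1)q^rV_{s-r}$ and $S_n^{(r,s)}(V)=a_{n+1}V_{2n+r+s}+(n+1)q^rV_{s-r}$, specialized to the pairs $(s,s+t)$ and $(r,r+t)$. Each sum then becomes $a_{n+1}$ times a single Lucas number, $V_{2n+2s+t}$ or $V_{2n+2r+t}$, plus a linear-in-$n$ term of the form $\pm(n+1)q^{\bullet}V_t$. The decisive point is that the weight $q^{s-r}$ is exactly what annihilates these linear terms: multiplying the $U$-combination of \eqref{3.11} through by $\Delta$ so as to use \eqref{3.95} directly, the secular contributions are $-(n+1)q^sV_t$ and $-q^{s-r}(-(n+1)q^rV_t)=+(n+1)q^sV_t$, which cancel because $q^{s-r}q^r=q^s$. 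The identical cancellation disposes of the linear terms in the $V$-combination of \eqref{3.11} and in $S_n^{(s,s+t)}(V)+\Delta q^{s-r}S_n^{(r,r+t)}(U)$ of \eqref{3.13}, leaving $a_{n+1}$ times $V_{2n+2s+t}\mp q^{s-r}V_{2n+2r+t}$.

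Finally I would collapse each surviving factor by the product identities $V_{M+k}-q^kV_{M-k}=\Delta\,U_kU_M$ and $V_{M+k}+q^kV_{M-k}=V_kV_M$, which fall out in one line from the Binet forms $U_n=(\alpha^n-\beta^n)/(\alpha-\beta)$ and $V_n=\alpha^n+\beta^n$, using $\alpha\beta=q$ and $(\alpha-\beta)^2=\Delta$. Taking $k=s-r$ and $M=2n+r+s+t$ turns the two surviving factors into $\Delta\,U_{s-r}U_{2n+r+s+t}$ and $V_{s-r}V_{2n+r+s+t}$; multiplying by $a_{n+1}$ and dividing by $\Delta$ on the $U$-side then yields exactly the right-hand sides of \eqref{3.11} and \eqref{3.13}. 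I anticipate no real obstacle: the two product identities are immediate from Binet, so the only genuine care is in tracking the index shifts $2n+2s+t$ and $2n+2r+t$ and checking the $q$-exponent bookkeeping that forces the cancellation of the linear terms.
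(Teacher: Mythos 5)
Your proof is correct and follows exactly the paper's own (very terse) route: substitute (\ref{3.95}) and (\ref{3.96}) at the index pairs $(s,s+t)$ and $(r,r+t)$, let the weight $q^{s-r}$ cancel the $(n+1)q^{s}V_{t}$ terms, and collapse what remains via relations 3 and 4 of Lemma \ref{lemma a} --- your two Binet identities $V_{M+k}-q^{k}V_{M-k}=\Delta U_{k}U_{M}$ and $V_{M+k}+q^{k}V_{M-k}=V_{k}V_{M}$ are precisely those relations with $k=s-r$, $M=2n+r+s+t$. You also correctly diagnosed that the superscripts $(s,s)$ and $(r,r)$ printed in (\ref{3.11}) must be read as $(s,s+t)$ and $(r,r+t)$, a correction forced by the $t$ appearing on the right-hand side and consistent with (\ref{3.13}).
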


\section{Proof of the main result}

We shall use the following Lemmas

\begin{lemma}
\label{lemma a}For all integers $n,\ m$ and $h,$ we have

$%
\begin{array}{ll}
\mathit{1}.\ \ U_{-n}=-q^{-n}U_{n}, & \ \ \mathit{2}.\ \ V_{-n}=q^{-n}V_{n},
\\
\mathit{3}.\ \ \Delta U_{n}U_{m}=V_{n+m}-q^{m}V_{n-m}, & \ \ \mathit{4}.\ \
V_{n}V_{m}=V_{n+m}+q^{m}V_{n-m}, \\
\mathit{5}.\ \ U_{n}V_{m}=U_{n+m}+q^{m}U_{n-m}, & \ \ \mathit{6}.\ \
V_{n}U_{m}=U_{n+m}-q^{m}U_{n-m}, \\
\mathit{7}.\ \ U_{n}U_{m+h}-U_{n+h}U_{m}=q^{m}U_{h}U_{n-m},\ \ \ \ \ \ \ \ \
\ \  & \ \ \mathit{8}.\ \ V_{n}V_{m+h}-V_{n+h}V_{m}=-q^{m}\Delta
U_{h}U_{n-m}, \\
\mathit{9}.\ \ V_{n}V_{m+h}-\Delta U_{n+h}U_{m}=q^{m}V_{h}V_{n-m}, & \,%
\mathit{10}.\ \ U_{n}V_{m+h}-U_{n+h}V_{m}=-q^{m}U_{h}V_{n-m}.%
\end{array}%
$
\end{lemma}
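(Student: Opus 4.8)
The plan is to introduce the Binet (closed) forms for both sequences and to reduce every one of the ten identities to an elementary manipulation of powers of the two characteristic roots. Since $\Delta=p^{2}-4q\neq 0$, the polynomial $x^{2}-px+q$ has two distinct roots $\alpha,\beta$ (lying in $\mathbb{Q}(\sqrt{\Delta})$ when $\Delta$ is not a square), with $\alpha+\beta=p$, $\alpha\beta=q$ and $(\alpha-\beta)^{2}=\Delta$. One checks that $U_{n}=(\alpha^{n}-\beta^{n})/(\alpha-\beta)$ and $V_{n}=\alpha^{n}+\beta^{n}$ satisfy both recurrences and the initial data $U_{0}=0,U_{1}=1,V_{0}=2,V_{1}=p$, hence coincide with the defined sequences for all $n$. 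Because $q=\alpha\beta\neq 0$, both $\alpha$ and $\beta$ are invertible, so these formulas remain valid for negative indices, and every manipulation below is a genuine identity valid for all integers $n,m,h$ (equivalently, a polynomial identity in $p,q$).

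With these forms in hand, identities $1$ and $2$ are immediate from $\alpha^{-n}=q^{-n}\beta^{n}$ and $\beta^{-n}=q^{-n}\alpha^{n}$. For identities $3$--$6$ I would expand the relevant product $(\alpha^{n}\pm\beta^{n})(\alpha^{m}\pm\beta^{m})$, keep the diagonal terms $\alpha^{n+m}\pm\beta^{n+m}$ as a $V$ or $U$ of index $n+m$, and factor the cross terms through $\alpha^{n}\beta^{m}+\alpha^{m}\beta^{n}=q^{m}(\alpha^{n-m}+\beta^{n-m})$ and $\alpha^{n}\beta^{m}-\alpha^{m}\beta^{n}=q^{m}(\alpha^{n-m}-\beta^{n-m})$; dividing by the appropriate power of $\alpha-\beta$ then produces $V_{n-m}$ or $U_{n-m}$ with the predicted sign and the factor $q^{m}$.

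The two determinant-type identities $7$ and $8$ form the computationally heaviest step, and the one I expect to be the main obstacle to carry out cleanly. Substituting the Binet forms into $U_{n}U_{m+h}-U_{n+h}U_{m}$ (and into the $V$ analogue), the fully diagonal terms $\alpha^{n+m+h}$ and $\beta^{n+m+h}$ cancel, and regrouping the four surviving terms exhibits the common factor $(\alpha^{h}-\beta^{h})(\alpha^{n-m}-\beta^{n-m})$ together with $q^{m}$ and a sign. Recognizing this product as $\Delta\,U_{h}U_{n-m}$ yields $8$ directly and, after dividing by $(\alpha-\beta)^{2}$ in the $U$-case, yields $7$; the delicate point is simply bookkeeping of the signs and of which terms pair off.

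Finally, identities $9$ and $10$ need not be reproved from the roots: I would deduce them algebraically from $3$--$6$. For $9$, rewrite $V_{n}V_{m+h}$ by $4$ and $\Delta U_{n+h}U_{m}$ by $3$, subtract, and collect the two remaining terms as $q^{m+h}V_{n-m-h}+q^{m}V_{n-m+h}=q^{m}(V_{n-m+h}+q^{h}V_{n-m-h})$, which is $q^{m}V_{h}V_{n-m}$ by one more application of $4$. Identity $10$ comes out the same way, using $5$ twice and then recognizing the collected terms via $6$. No ordering or positivity of $n,m,h$ is ever needed, which is precisely the advantage of carrying everything through the Binet forms.
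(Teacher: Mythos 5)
Your proof is correct and follows essentially the same route as the paper, which likewise establishes all ten identities from the Binet forms $U_{n}=(\alpha^{n}-\beta^{n})/(\alpha-\beta)$ and $V_{n}=\alpha^{n}+\beta^{n}$. Note only that your characteristic polynomial $x^{2}-px+q$ is the right one for the recurrence $X_{n}=pX_{n-1}-qX_{n-2}$ (consistent with $\alpha\beta=q$ and $(\alpha-\beta)^{2}=\Delta=p^{2}-4q$), whereas the paper misprints it as $x^{2}-px-q=0$.
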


\begin{lemma}
\label{lemma c}For all integers $r$ and $n\geq 0,$ we have

1. $\ \Delta U_{2}\sum_{i=0}^{n}U_{r+4i}=V_{4n+r+2}-V_{r-2}=\Delta
U_{2n+r}U_{2n+2},$

2. \ $U_{2}\sum_{i=0}^{n}V_{r+4i}=U_{4n+r+2}-U_{r-2}=V_{2n+r}U_{2n+2},$

3. \ $V_{2}\sum_{i=0}^{n}\left( -1\right) ^{i}U_{r+4i}=\left( -1\right)
^{n}U_{4n+r+2}+U_{r-2}=\left\{
\begin{array}{ll}
U_{2n+r}V_{2n+2} & \text{\ if }n\text{ is even} \\
-V_{2n+r}U_{2n+2} & \text{\ if }n\text{ is odd}%
\end{array}%
\right. ,$

4. \ $V_{2}\sum_{i=0}^{n}\left( -1\right) ^{i}V_{r+4i}=\left( -1\right)
^{n}V_{4n+r+2}+V_{r-2}=\left\{
\begin{array}{ll}
V_{2n+r}V_{2n+2} & \text{\ if }n\text{ is even} \\
-\Delta U_{2n+r}U_{2n+2} & \text{\ if }n\text{ is odd}%
\end{array}%
\right. .$
\end{lemma}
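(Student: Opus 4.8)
The plan is to prove each of the four identities by \emph{telescoping}, using the product formulas of Lemma~\ref{lemma a} to rewrite every summand as a difference (or a signed difference) of two $V$- or $U$-values whose index differs by $4$. Throughout I would invoke the standing hypothesis $q=\pm1$, so that $q^2=1$ and hence $q^{2k}=1$ for every integer $k$; this is exactly what makes the extra coefficients collapse.

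For the two non-alternating sums (parts 1 and 2), I would apply Lemma~\ref{lemma a}.3 with $(n,m)=(r+4i,2)$ to obtain $\Delta U_2 U_{r+4i}=V_{r+4i+2}-q^2V_{r+4i-2}=V_{r+4i+2}-V_{r+4i-2}$, and Lemma~\ref{lemma a}.6 with the same indices to obtain $U_2 V_{r+4i}=U_{r+4i+2}-U_{r+4i-2}$. Since the index jumps by $4$ at each step, setting $t_i:=V_{r+4i-2}$ (resp. $U_{r+4i-2}$) makes each summand equal to $t_{i+1}-t_i$, so the sum telescopes to $t_{n+1}-t_0=V_{4n+r+2}-V_{r-2}$ (resp. $U_{4n+r+2}-U_{r-2}$). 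This yields the first equality in each of parts 1 and 2.

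For the alternating sums (parts 3 and 4), I would instead use Lemma~\ref{lemma a}.5 and Lemma~\ref{lemma a}.4 with $(n,m)=(r+4i,2)$, which give $V_2 U_{r+4i}=U_{r+4i+2}+U_{r+4i-2}$ and $V_2 V_{r+4i}=V_{r+4i+2}+V_{r+4i-2}$ (again using $q^2=1$). Writing $c_i:=(-1)^i U_{r+4i-2}$ (resp. $(-1)^i V_{r+4i-2}$), one checks that $(-1)^i\bigl(X_{r+4i+2}+X_{r+4i-2}\bigr)=c_i-c_{i+1}$, so the alternating sum telescopes to $c_0-c_{n+1}=X_{r-2}+(-1)^n X_{4n+r+2}$, which is precisely the stated first equality.

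Finally, the second equality in each part rewrites the two-term difference as a single product, and this too is a direct substitution into Lemma~\ref{lemma a}. For parts 1 and 2, Lemma~\ref{lemma a}.3 and Lemma~\ref{lemma a}.6 with $(n,m)=(2n+r,2n+2)$ give $V_{4n+r+2}-V_{r-2}=\Delta U_{2n+r}U_{2n+2}$ and $U_{4n+r+2}-U_{r-2}=V_{2n+r}U_{2n+2}$, once more because $q^{2n+2}=1$. For the alternating parts 3 and 4 I would split on the parity of $n$: when $n$ is even the sign is $+$ and Lemma~\ref{lemma a}.5 (resp. Lemma~\ref{lemma a}.4) produces $U_{2n+r}V_{2n+2}$ (resp. $V_{2n+r}V_{2n+2}$); when $n$ is odd the sign flips and Lemma~\ref{lemma a}.6 (resp. Lemma~\ref{lemma a}.3) produces $-V_{2n+r}U_{2n+2}$ (resp. $-\Delta U_{2n+r}U_{2n+2}$). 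The only genuine care needed lies in tracking the sign and the parity of $n$ in the alternating cases and in getting the telescoping index shift right; no real obstacle arises, since every step is a substitution into an identity of Lemma~\ref{lemma a} followed by the collapse $q^2=1$.
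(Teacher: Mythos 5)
Your proposal is correct and follows essentially the same route as the paper: the paper's proof consists of the single remark that Lemma~\ref{lemma c} ``follows from relations 3. 4. 5. 6. of Lemma \ref{lemma a}'', and your argument is exactly that, with the telescoping (using $q^2=1$) and the parity bookkeeping for the alternating cases written out explicitly. Nothing is missing and no step fails; you have simply supplied the details the paper leaves implicit.
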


\begin{proof}[Proofs]
For Lemma \ref{lemma a}, we use Binet's forms of $U_{n}$\ and $V_{n}:U_{n}=%
\frac{\alpha ^{n}-\beta ^{n}}{\alpha -\beta }$\ and $V_{n}=\alpha ^{n}+\beta
^{n}$ where $\alpha $ and $\beta $ are the roots of $x^{2}-px-q=0.$ Lemma %
\ref{lemma c} follows from relations 3. 4. 5. 6. of Lemma \ref{lemma a}. We
obtain Theorem \ref{AA} and Corollary \ref{c1} from relations 3. 4. 5. 6. of
Lemma \ref{lemma a} and Lemma \ref{lemma c}, and Corollary \ref{c2} from
relations (\ref{3.95}), (\ref{3.96}) and 3. 4. of Lemma \ref{lemma a}.
\end{proof}

\section{Applications: extension of \v{C}erin \& Gianella results}

The following Theorem is a generalization of Cerin's Theorems 1.1, 1.2 and
1.3 cited in \cite{cer1}

\begin{theorem}
For all integers $m\geq 0$ and $k$, we have%
\begin{eqnarray}
-p^{2}q+V_{2k}^{2} &=&\Delta U_{2k-1}U_{2k+1}\text{ \ and \ }%
-qV_{2}^{2}+V_{2k-1}^{2}=\Delta U_{2k-3}U_{2k+1},  \label{91} \\
\delta _{n}+A_{n}^{\left( 2k,2k\right) }\left( V\right) &=&\left\{
\begin{array}{ll}
\Delta d_{m}U_{2k+2m+1}U_{2k+2m-1} & \text{\ if }n=2m, \\
-p\Delta c_{m}U_{2k+2m+3}V_{2k+2m-1} & \text{\ if }n=2m+1,%
\end{array}%
\right.  \label{92} \\
\theta _{n}+A_{n}^{\left( 2k-1,2k-1\right) }\left( V\right) &=&\left\{
\begin{array}{ll}
\Delta d_{m}U_{2k+2m-1}^{2} & \text{\ if }n=2m, \\
-p\Delta c_{m}U_{2k+2m+1}V_{2k+2m-1} & \text{\ if }n=2m+1,%
\end{array}%
\right.  \label{93} \\
\xi _{n}+A_{n}^{\left( 2k-1,2k\right) }\left( V\right) &=&\left\{
\begin{array}{ll}
\Delta d_{m}U_{2k+2m-1}U_{2k+2m} & \text{\ if }n=2m, \\
-p\Delta c_{m}U_{2k+2m}V_{2k+2m+1}\ \ \  & \text{\ if }n=2m+1.%
\end{array}%
\right.  \label{94}
\end{eqnarray}

Where $\left( \delta _{n}\right) ,$ $\left( \theta _{n}\right) $ and $\left(
\xi _{n}\right) $ are defined as follows: $\left( \delta _{2m},\delta
_{2m+1}\right) =\left( -qV_{2m+1}^{2},-pq\Delta U_{4m+4}\right) ;\ \ \ $ $%
\left( \theta _{2m},\theta _{2m+1}\right) =\left( -2q\left( 1+d_{m}\right)
,-p^{2}q\Delta c_{m}\right) ;$ $\left( \xi _{2m},\xi _{2m+1}\right) =\left(
-pq\left( 1+d_{m}\right) ,-p\Delta c_{m}\right) .$
\end{theorem}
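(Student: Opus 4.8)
The plan is to reduce every identity to Corollary~\ref{c1} together with the product-to-sum formulas of Lemma~\ref{lemma a}; the auxiliary sequences $\delta_n$, $\theta_n$, $\xi_n$ are engineered precisely to absorb the boundary terms that Lemma~\ref{lemma a} produces, so the proof is essentially a matter of expanding and matching.

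First I would settle the two $n$-free identities in (\ref{91}), which serve as a warm-up. Part~3 of Lemma~\ref{lemma a} with $(n,m)=(2k+1,2k-1)$ gives $\Delta U_{2k-1}U_{2k+1}=V_{4k}-q^{2k-1}V_2=V_{4k}-qV_2$, while part~4 with equal indices gives $V_{2k}^2=V_{4k}+q^{2k}V_0=V_{4k}+2$. Subtracting and using the base value $V_2=p^2-2q$ yields $-p^2q+V_{2k}^2=\Delta U_{2k-1}U_{2k+1}$. The second identity is entirely analogous, now with the offset $V_{-4}=V_4$ and the relation $V_4=V_2^2-2$.

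For the three families (\ref{92})--(\ref{94}) I would fix the pair $(r,s)$ — respectively $(2k,2k)$, $(2k-1,2k-1)$ and $(2k-1,2k)$ — insert it into the closed form for $A_n^{(r,s)}(V)$ supplied by Corollary~\ref{c1}, and split according to parity. For $n=2m$ Corollary~\ref{c1} gives $A_{2m}^{(r,s)}(V)=d_mV_{4m+r+s}+q^rV_{s-r}$; running part~3 of Lemma~\ref{lemma a} backwards, $\Delta U_aU_b=V_{a+b}-q^bV_{a-b}$ with $a+b=4m+r+s$ and offset $a-b\in\{2,0,-1\}$ reproduces $d_mV_{4m+r+s}$ and leaves a residual $-d_mq^bV_{a-b}$. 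Collecting this with the term $q^rV_{s-r}$ and applying the square relation of Lemma~\ref{lemma a} part~4 (for instance $V_{2m+1}^2=V_{4m+2}+2q$ in case (\ref{92})) identifies the total discrepancy as exactly $\delta_{2m}$ (resp.\ $\theta_{2m}$, $\xi_{2m}$). For $n=2m+1$ Corollary~\ref{c1} gives the single term $-p\Delta c_mU_{4m+r+s+2}$; here I would expand the target product $U_aV_b$ by part~5 of Lemma~\ref{lemma a} as $U_{a+b}+q^bU_{a-b}$ with $a+b=4m+r+s+2$, which matches the $c_m$-term and leaves a bounded multiple of $c_m$ equal to the stated correction. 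Throughout, the reductions $q^{2j}=1$, $q^{2j+1}=q$ and the reflection formulas $U_{-n}=-q^{-n}U_n$, $V_{-n}=q^{-n}V_n$ of parts~1--2 (in particular $V_{-1}=qp$, $U_{-1}=-q$) collapse the low-index offsets into the numerical constants of $\delta,\theta,\xi$, while $c_mU_4=U_{4m+4}$ and $d_mV_2=V_{4m+2}$ reincorporate the $c_m$- and $d_m$-factors.

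The routine but error-prone step, and the one I would guard most carefully, is the parity bookkeeping in the odd cases: the sign of the leftover constant is governed by $q^{\mathrm{odd}}=q$ together with $U_{-1}=-q$, and it is precisely these signs that decide whether the trailing multiple of $c_m$ enters with a plus or a minus. Once both parities are verified for each of (\ref{92})--(\ref{94}), the theorem follows by reading off the three correction sequences.
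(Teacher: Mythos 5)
Your method is exactly the paper's: identities (\ref{92})--(\ref{94}) are obtained from relation (\ref{3.99}) of Corollary \ref{c1} combined with the product-to-sum relations 3.\ 4.\ 5.\ 6.\ of Lemma \ref{lemma a}, and (\ref{91}) also comes from Lemma \ref{lemma a} (the paper invokes relation 9.\ directly with $(n,m,h)=(2k,2k-1,1)$ and $(2k-1,2k-3,2)$, whereas you re-derive that special case from relations 3.\ and 4.; a cosmetic difference). However, at the precise step you flag as error-prone but do not actually carry out --- the sign of the leftover constant in the odd case of (\ref{94}) --- a faithful computation contradicts the statement: relation 5.\ with $U_{-1}=-q$ gives $U_{2k+2m}V_{2k+2m+1}=U_{4k+4m+1}+q^{2k+2m+1}U_{-1}=U_{4k+4m+1}-1$, hence $A_{2m+1}^{(2k-1,2k)}(V)=-p\Delta c_{m}U_{4k+4m+1}=-p\Delta c_{m}U_{2k+2m}V_{2k+2m+1}-p\Delta c_{m}$, forcing $\xi_{2m+1}=+p\Delta c_{m}$ rather than the stated $-p\Delta c_{m}$. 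A numerical check confirms this: for $(p,q)=(1,-1)$, $k=1$, $m=0$, one has $A_{1}^{(1,2)}(V)=L_{1}L_{2}-L_{3}L_{4}=3-28=-25$ while $-p\Delta c_{0}U_{2}V_{3}=-5F_{2}L_{3}=-20$, so $\xi_{1}=+5=+p\Delta c_{0}$. So your approach is sound (it is the paper's own), and the even cases together with the odd cases of (\ref{92}) and (\ref{93}) do come out exactly as you describe; but your blanket assertion that the trailing multiple of $c_{m}$ ``equals the stated correction'' fails for (\ref{94}): the paper's definition of $\xi_{2m+1}$ carries a sign error, which your procedure, executed to the end, would have exposed rather than confirmed.
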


The relations $\delta _{m}=\delta _{m-2}-p^{2}q\Delta V_{2m}$ and $\theta
_{m}=\theta _{m-2}-p^{2}q\Delta d_{\left( m-1\right) /2}$ for $m$ odd, and $%
\theta _{m}=-\theta _{m-2}-2qV_{m/2}^{2}$ for $m$ even, are easily
established. Then, one verifies that we obtain Theorems of \cite{cer1} when $%
\left( p,q\right) =\left( 1,-1\right) .$

\begin{proof}
For (\ref{91}), we use relation 9. of Lemma \ref{lemma a} with $\left(
n,m,h\right) =\left( 2k,2k-1,1\right) $ and $\left( n,m,h\right) =\left(
2k-1,2k-3,2\right) $ respectively. For relations (\ref{92}), (\ref{93}) and (%
\ref{94}), we use relation (\ref{3.99}) for $\left( r,s\right) =\left(
2k,2k\right) $ resp. $\left( r,s\right) =\left( 2k-1,2k-1\right) $ and $%
\left( r,s\right) =\left( 2k-1,2k\right) $ and noticing that, using
relations 3. 4. 5. 6. of Lemma \ref{lemma a}, we have $\left\{
\begin{array}{l}
U_{4k+4m+2}=U_{2k+2m+3}V_{2k+2m-1}-qU_{4}, \\
V_{4k+4m}=\Delta U_{2k+2m+1}V_{2k+2m-1}+qV_{2}\text{ and }%
V_{4m+2}+2q=V_{2m+1}^{2},%
\end{array}%
\right. $

resp.

$\ \ \ \ \ \ \ \left\{
\begin{array}{l}
U_{4k+4m}=U_{2k+2m+1}V_{2k+2m-1}-qU_{2}, \\
V_{4k+4m-2}=\Delta U_{2k+2m-1}^{2}+2q,%
\end{array}%
\right. ,\ \ \left\{
\begin{array}{l}
U_{4k+4m+1}=U_{2k+2m}V_{2k+2m+1}+1, \\
V_{4k+4m-1}=\Delta U_{2k+2m-1}U_{2k+2m}+pq.%
\end{array}%
\right. $
\end{proof}

\begin{theorem}
For all integers $n\geq 0$ and $r,s,t$ and $k$, the following equalities hold%
\begin{eqnarray}
S_{n}^{\left( s,s+t\right) }\left( V\right) &=&\lambda
_{n}+a_{n+1}V_{s-r}V_{2n+r+s+t},\text{ \ with }\lambda _{n}=-q^{r-s}\Delta
S_{n}^{\left( r,r+t\right) }\left( U\right) ,  \label{5.71} \\
A_{n}^{\left( 2k,2k\right) }\left( V\right) &=&\left\{
\begin{array}{ll}
d_{m}V_{2k+2m}^{2}-2p^{2}\Delta b_{m-1} & \text{\ \ if }n=2m \\
p^{2}\Delta c_{m}\left( 1-a_{k+m+1}V_{2k+2m}\right) & \text{\ \ if }n=2m+1%
\end{array}%
\right. ,  \label{5.81} \\
A_{n}^{\left( 2k+1,2k+1\right) }\left( V\right) &=&\left\{
\begin{array}{ll}
-\Delta U_{2m+1}^{2}+d_{m}V_{2k+2m}V_{2k+2m+2} & \text{\ \ if }n=2m \\
-p^{2}\Delta c_{m}a_{k+m+1}V_{2k+2m+2} & \text{\ \ if }n=2m+1%
\end{array}%
\right. ,  \label{5.91} \\
A_{n}^{\left( 2k,2k+1\right) }\left( V\right) &=&\left\{
\begin{array}{ll}
d_{m}V_{2k+2m}V_{2k+2m+1}-e_{m} & \text{\ \ if }n=2m \\
-p\Delta c_{m}\left( U_{2k+2m+3}V_{2k+2m}-p^{2}+q\right) & \text{\ \ if }%
n=2m+1%
\end{array}%
\right. .  \label{5.101}
\end{eqnarray}
\end{theorem}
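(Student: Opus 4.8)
The plan is to obtain all four displayed identities from Corollaries \ref{c1} and \ref{c2} together with the product identities 3, 4, 5 of Lemma \ref{lemma a}; no new machinery is needed, and the only real task is to reshape the ``doubled-index'' closed forms of those corollaries into the ``product of two single-index terms'' shape demanded here. First I would dispatch (\ref{5.71}): it is simply relation (\ref{3.13}) of Corollary \ref{c2} solved for $S_n^{(s,s+t)}(V)$, since $\lambda_n=-q^{r-s}\Delta S_n^{(r,r+t)}(U)$ and the term $-\Delta q^{s-r}S_n^{(r,r+t)}(U)$ appearing in (\ref{3.13}) coincide under the standing hypothesis $q=\pm1$, which forces $q^{s-r}=q^{r-s}$.

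For the three alternating-sum identities I would specialize relation (\ref{3.99}) of Corollary \ref{c1} to the relevant $(r,s)$ and treat the two parities separately. In the even case $n=2m$, (\ref{3.99}) reads $A_{2m}^{(r,s)}(V)=d_mV_{4m+r+s}+q^rV_{s-r}$; since $q=\pm1$ gives $q^{2k}=1$ and $q^{2k+1}=q$, the aim is to replace $d_mV_{4m+r+s}$ by a product via identity 4, namely $V_aV_b=V_{a+b}+q^bV_{a-b}$: for $(2k,2k)$ one uses $V_{2k+2m}^2=V_{4k+4m}+2$, for $(2k+1,2k+1)$ one uses $V_{2k+2m}V_{2k+2m+2}=V_{4k+4m+2}+V_2$, and for $(2k,2k+1)$ one uses $V_{2k+2m}V_{2k+2m+1}=V_{4k+4m+1}+V_1$. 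The residual constant is then matched to the auxiliary sequences: $-2(d_m-1)=-2p^2\Delta b_{m-1}$ by the definition of $b_n$ for (\ref{5.81}); the extra $-\Delta U_{2m+1}^2$ in (\ref{5.91}) is exactly $-(V_{4m+2}-2q)=-(d_mV_2-2q)$ by identity 3; and $p(d_m-1)=e_m$ by the definition of $e_n$ for (\ref{5.101}).

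In the odd case $n=2m+1$, (\ref{3.99}) reads $A_{2m+1}^{(r,s)}(V)=-p\Delta c_mU_{4m+r+s+2}$, and here I would use identity 5, namely $U_aV_b=U_{a+b}+q^bU_{a-b}$, together with $a_{k+m+1}=U_{2k+2m+2}/U_2$ and $U_2=p$, to expand the single $U$-term: for $(2k,2k)$, $U_{2k+2m+2}V_{2k+2m}=U_{4k+4m+2}+U_2$ gives $p\,a_{k+m+1}V_{2k+2m}=U_{4k+4m+2}+p$, whence the stated form; for $(2k+1,2k+1)$, $U_{2k+2m+2}V_{2k+2m+2}=U_{4k+4m+4}$ since $U_0=0$, giving $p\,a_{k+m+1}V_{2k+2m+2}=U_{4k+4m+4}$; and for $(2k,2k+1)$, $U_{2k+2m+3}V_{2k+2m}=U_{4k+4m+3}+U_3$ with $U_3=p^2-q$ produces the correction $-p^2+q$.

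I expect the only real obstacle to be bookkeeping: keeping the index shifts straight and recognizing each leftover constant as the correct instance of $b$ or $e$, or as a small value $U_0=0,\ U_2=p,\ U_3=p^2-q$. There is no single deep step; each line reduces to one application of a product formula from Lemma \ref{lemma a} followed by a substitution of an auxiliary-sequence definition.
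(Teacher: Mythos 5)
Your proposal is correct and follows essentially the same route as the paper: relation (\ref{5.71}) is read off from (\ref{3.13}) (with the observation $q^{s-r}=q^{r-s}$ valid since $q=\pm 1$), and the three alternating sums come from specializing (\ref{3.99}) and converting the doubled-index terms $V_{4m+r+s}$ and $U_{4m+r+s+2}$ into products via identities 4 and 5 of Lemma \ref{lemma a}, with identity 3 supplying $V_2 d_m-2q=\Delta U_{2m+1}^2$ for (\ref{5.91}) and the definitions of $b_n$ and $e_n$ absorbing the leftover constants. The only difference is presentational: the paper compresses the six case computations into two parametrized identities, while you write each case out explicitly.
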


\begin{proof}
For (\ref{5.71}) use (\ref{3.13}). For (\ref{5.81}), (\ref{5.91}) and (\ref%
{5.101}), we use (\ref{3.99}) when $r=s=2k$ resp. $r=s=2k+1$ and $\left(
r,s\right) =\left( 2k,2k+1\right) $ using, for $t=0$ resp. $t=2$ and $t=1,$
relations $V_{4k+4m+t}=V_{2k+2m+t}V_{2k+2m}-V_{t}$ and $%
U_{4k+4m+t+2}=U_{2k+2m+2-r\left( r-2\right) }V_{2k+2m+r\left( r-1\right)
}-U_{\left( 2-r\right) \left( 2r+1\right) },$ derived from relations 4. and
5. of Lemma \ref{lemma a}. For (\ref{5.91}), we also use $%
V_{2}d_{m}-2q=V_{4m+2}-2q=\Delta U_{2m+1}^{2}$ derived from 3. of Lemma \ref%
{lemma a}.
\end{proof}

Notice that from the first relation of Theorem \ref{AA}, $\lambda
_{n}=-p^{-1}q^{s-r}\left( U_{4n+2r+t+2}-U_{2r+t-2}\right) +\left( n+1\right)
q^{s}V_{t}$, we have also $e_{m}=pV_{2}^{-1}\left( V_{4m+2}-V_{-2}\right)
=p^{3}\Delta \sum_{j=0}^{m}c_{j-1}$ using first relation of Lemma \ref{lemma
c}$.$

For $\left( p,q\right) =\left( 2,-1\right) ,$ we obtain Theorems 1, 2, 3, 4,
5, 6 and 7 of \v{C}erin and Gianella cited in \cite{cer2}: relation (\ref%
{5.71}), with $\left( s,t\right) =\left( 2k,0\right) $ and $r\in \left\{
0,2,1,-1\right\} $ give respectively Theorem 1 and relations $\left(
2.3\right) ,$ $\left( 2.4\right) $ and $\left( 2.5\right) ,$ with $\left(
s,t\right) =\left( 2k+1,0\right) $ and $r\in \left\{ 2,3\right\} $ give
Theorems 2 and 3, and with $\left( s,t\right) =\left( 2k,1\right) $ and $r=0$
give Theorem 4. Relations (\ref{5.81}), (\ref{5.91}) and (\ref{5.101}) give
Theorems 5, 6 and 7.

Relations 8. 9. of Lemma \ref{lemma a} allow us to obtain immediately the
following Theorem

\begin{theorem}
For all integers $n,\ m,\ r,\ s,$ we have%
\begin{equation*}
V_{n}V_{m}=V_{n+r}V_{m-r}+q^{n}\Delta U_{r}U_{m-n-r}=\Delta
U_{n+s}U_{m-s}+q^{m-s}V_{s}V_{n-m+s}.
\end{equation*}
\end{theorem}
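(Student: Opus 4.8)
The plan is to read each of the two displayed identities as a single specialization of one of the bilinear relations in Lemma \ref{lemma a}, so that essentially no computation remains beyond a substitution of the free indices. Both relations \textit{8} and \textit{9} hold for all integers, and each already has the shape ``a product of two $V$'s minus a cross term equals $q^{\bullet}$ times a product''; the only work is to align the indices on their left-hand sides with the target $V_{n}V_{m}$.

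For the first equality I would invoke relation \textit{8}, namely $V_{N}V_{M+H}-V_{N+H}V_{M}=-q^{M}\Delta U_{H}U_{N-M}$, and set $(N,M,H)=(m-r,n,r)$. The left-hand side then becomes $V_{m-r}V_{n+r}-V_{m}V_{n}$ and the right-hand side becomes $-q^{n}\Delta U_{r}U_{m-n-r}$, so transposing yields exactly $V_{n}V_{m}=V_{n+r}V_{m-r}+q^{n}\Delta U_{r}U_{m-n-r}$. For the second equality I would invoke relation \textit{9}, namely $V_{N}V_{M+H}-\Delta U_{N+H}U_{M}=q^{M}V_{H}V_{N-M}$, and set $(N,M,H)=(n,m-s,s)$. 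Here the left-hand side is $V_{n}V_{m}-\Delta U_{n+s}U_{m-s}$ and the right-hand side is $q^{m-s}V_{s}V_{n-m+s}$, which rearranges to the claimed $V_{n}V_{m}=\Delta U_{n+s}U_{m-s}+q^{m-s}V_{s}V_{n-m+s}$. Since both $n$ and $m$ range over all integers, no parity or sign hypotheses are needed.

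Since relations \textit{8} and \textit{9} are themselves consequences of the Binet forms (as in the proof of Lemma \ref{lemma a}), one could instead derive the theorem from scratch by expanding both sides in $\alpha$ and $\beta$; but routing through the lemma is cleaner and matches the paper's stated intent. The only genuine point of care, and the closest thing to an obstacle, is the bookkeeping of the exponent of $q$ together with the choice of which index plays the role of $M$: a different specialization would produce the same identity written with a shifted power of $q$ and a reflected $U$- or $V$-index, interconverted via relation \textit{1}, $U_{-k}=-q^{-k}U_{k}$, and relation \textit{2}, $V_{-k}=q^{-k}V_{k}$. Choosing $(N,M,H)$ so that the surviving $q$-power is $q^{n}$ in the first identity and $q^{m-s}$ in the second pins down the stated normalization, after which both equalities are immediate.
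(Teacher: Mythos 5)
Your proposal is correct and matches the paper's own argument: the paper states that the theorem follows immediately from relations \emph{8} and \emph{9} of Lemma \ref{lemma a}, and your substitutions $(N,M,H)=(m-r,n,r)$ and $(N,M,H)=(n,m-s,s)$ verify exactly that, with the $q$-exponents and indices landing precisely on the stated normalization.
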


For $\left( p,q\right) =\left( 2,-1\right) ,\ n=2k,\ m=2k+1,\ r=3$ and $s=2,$
and by setting $P_{n}^{\star }=2P_{n}$ for all $n,$ one gets $%
Q_{2k}Q_{2k+1}=Q_{2k+3}Q_{2k-2}-80=8P_{2k+2}P_{2k-1}-12=2\left(
P_{2k+2}^{\star }P_{2k-1}^{\star }-6\right) $ which is Theorem 8 of \cite%
{cer2}, where \v{C}erin and Gianella called $\left( P_{n}^{\star }\right)
_{n}$ the Pell sequence instead of $\left( P_{n}\right) _{n}.$

\end{document}